\newcommand{\mengensymbol}[1]{\ensuremath{\mathbb{#1}}}%
\newcommand{\R}{\mengensymbol{R}}%
\newtheorem{definition}{Definition}[section]
\newtheorem{lemma}[definition]{Lemma}
\newtheorem{theorem}[definition]{Theorem}
\newtheorem{proposition}[definition]{Proposition}
\newtheorem{corollary}[definition]{Corollary}
\DeclareMathOperator{\Rm}{Rm}
\DeclareMathOperator{\Ric}{Ric}
\DeclareMathOperator{\divA}{div}
\DeclareMathOperator{\trace}{tr}
\DeclareMathOperator{\Mat}{Mat}
\DeclareMathOperator{\CurvatureDimension}{CD}
\DeclareMathOperator{\seg}{seg}
\title{Sobolev inequalities on manifolds
with nonnegative Bakry-Émery Ricci curvature}
\author{Florian Johne}
\address{Department of Mathematics \\ Columbia University \\ New York NY 10027}
\begin{document}
 
  \begin{abstract}
 In this note we extend a recent result of S.~Brendle \cite{Brendle-NonnegativeCurvature} to Riemannian manifolds with densities
 and nonnegative Bakry--Émery Ricci curvature.
 \end{abstract}
 
 \maketitle

 \section{Introduction}
 
 Suppose $(M,g)$ is a smooth complete noncompact Riemannian manifold of dimension $\dim M = m$.
 Given a positive smooth function $w$ (called the density) and $\alpha > 0$ we may consider the 
 Bakry-Émery Ricci curvature of the Riemannian metric measure space $(M,g,w \, d\mu)$ given by
 \[
  \Ric_w^{\alpha}
:= \Ric - D^2 (\log w) - \frac{1}{\alpha} D\log w \otimes D \log w,
 \]
where $\Ric$ denotes the Ricci curvature of the Riemannian metric $g$, $d\mu$ denotes the volume form associated of the Riemannian metric $g$, $Dv$ denotes the differential of a function $v$ and $D^2 v$ denotes the Hessian of a function $v$.
If the Bakry-Émery Ricci curvature is nonnegative, the triple $(M,g,w \, d\mu)$ is a $\CurvatureDimension(0,m+\alpha)$ space.

We define the $\alpha$-asymptotic volume ratio $\mathcal{V}_{\alpha}$ of $(M,g,w \, d\mu)$ by
\begin{align*}
 \mathcal{V}_{\alpha}
 =
 \lim_{r \rightarrow \infty}
 \frac{1}{ r^{m+\alpha}} \int_{B_r(q)} w, \;  \text{where} \; B_{r}(q)
 = \left\{ p \in M | d_g(p,q) < r \right\},
\end{align*}
 the function $d_g$ is the distance function induced by the Riemannian metric $g$
 and $q \in M$ is some given point.
 The analogue of the Bishop--Gromov volume comparison for manifolds
 with densities
 due to K.-T.~Sturm \cite[Theorem 2.3]{Sturm} implies
 that the limit exists and is independent of $q \in M$.
 In the appendix we provide a direct proof for this result in the smooth setting.
 
 We extend a result (Theorem 1.1 in \cite{Brendle-NonnegativeCurvature}) of S.~Brendle
 to our setting:
 
 \begin{theorem} \ \\
 Let $(M,g)$ be a smooth complete noncompact Riemannian manifold of dimension $\dim M = m$
 with smooth positive density $w$ and nonnegative Bakry-Emery Ricci curvature $\Ric_{\alpha}^w$.
Suppose $K \subset M$ is a compact domain with smooth boundary $\partial K$
and let $f$ be a smooth positive function on $K$.
 Then we have the estimate
  \begin{equation}
   \int_{K} w |D f| + \int_{\partial K} w f \geq 
   (m+\alpha) \, \mathcal{V}_{\alpha}^{\frac{1}{m+\alpha}}
   \left( \int_K w f^{\frac{m+\alpha}{m+\alpha-1}} \right)^{\frac{m+\alpha-1}{m+\alpha}},
  \end{equation}
  where $\mathcal{V}_{\alpha}$ denotes the $\alpha$-asymptotic volume ratio defined above.
  \label{Sobolev}
 \end{theorem}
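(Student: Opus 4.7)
The plan is to adapt the ABP-based argument of Brendle \cite{Brendle-NonnegativeCurvature} to the weighted setting. The proof has three main ingredients: a weighted Neumann problem whose compatibility condition encodes the left-hand side of the Sobolev inequality, a transport argument via the exponential map of the gradient of its solution, and a pointwise Jacobian estimate in which the effective dimension $m+\alpha$ and the Bakry--Émery correction $\tfrac{1}{\alpha}|D\log w|^2$ in $\Ric_w^{\alpha}$ appear naturally through an AM--GM inequality.

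In the first step I would solve a weighted Neumann problem
\[
L_w u := \Delta u + \langle D\log w, Du\rangle = h \ \text{ in } K,
\qquad
\langle Du,\nu\rangle = \varphi \ \text{ on } \partial K,
\]
with data $(h,\varphi)$ chosen as in \cite{Brendle-NonnegativeCurvature} so that the compatibility relation $\int_K w\, h\, d\mu = \int_{\partial K} w\,\varphi$ forced by integrating $\divA(w\, Du) = w\, L_w u$ eventually matches $\int_K w|Df| + \int_{\partial K} wf$ after the final estimates; a natural scaling candidate is $\varphi = f$ together with $h \propto f^{\frac{1}{m+\alpha-1}}$, with the remaining $\int_K w|Df|$ contribution arising from the contact-set step. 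Solvability and smoothness of $u$ are standard, since $L_w$ is self-adjoint with respect to $w\, d\mu$.

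Next I would define $\Phi_t(x) := \exp_x(t\, Du(x))$ together with a contact set $E \subset K$ on which $\operatorname{Id} + t\, D^2 u \geq 0$ and $|Du|$ is appropriately controlled. The Neumann boundary condition guarantees that for $t$ large $\Phi_t(E)$ covers $B_t(q)$ up to weighted measure of order $o(t^{m+\alpha})$, so the weighted area formula yields
\[
\int_{B_t(q)} w\, d\mu \leq (1+o(1))\int_E \frac{w(\Phi_t(x))}{w(x)}\, |\det D\Phi_t(x)|\, w(x)\, d\mu(x).
\]
The heart of the argument is then the pointwise estimate
\[
\Bigl(\tfrac{w(\Phi_t)}{w}\,\det D\Phi_t\Bigr)^{\frac{1}{m+\alpha}} \leq 1 + \frac{t}{m+\alpha}\, L_w u
\]
on $E$. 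Starting from the Riccati/Jacobi-field comparison under $\Ric \geq 0$, which gives the unweighted bound $(\det D\Phi_t)^{1/m} \leq 1 + \frac{t}{m}\Delta u$, I would append to the $m$ eigenvalues of $\operatorname{Id} + t\, D^2 u$ a formal $\alpha$-dimensional factor $e^{t\langle D\log w, Du\rangle/\alpha}$ accounting for the density ratio, and apply AM--GM across all $m+\alpha$ factors. Completing the square in the $\alpha$-block produces exactly the cross term $-\tfrac{1}{\alpha}(D\log w \otimes D\log w)(Du,Du)$ which, combined with $-D^2\log w$, promotes $\Ric$ to $\Ric_w^{\alpha} \geq 0$. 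Raising to the $(m+\alpha)$-th power, integrating, dividing by $t^{m+\alpha}$, letting $t\to\infty$ and invoking the definition of $\mathcal{V}_\alpha$ (together with Hölder in $w\, d\mu$ to recognise $\int_K w f^{(m+\alpha)/(m+\alpha-1)}$) yields the claimed inequality with the sharp constant.

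The main obstacle I expect is the AM--GM / completion-of-squares computation: it must simultaneously absorb the drift $\langle D\log w, Du\rangle$, produce the extra Bakry--Émery term $\tfrac{1}{\alpha}|D\log w|^2$ with the correct sign, and preserve the sharp constant $(m+\alpha)\mathcal{V}_\alpha^{1/(m+\alpha)}$; this is precisely where the seemingly ad hoc correction in the definition of $\Ric_w^{\alpha}$ is forced upon us. Secondary technical points such as regularity of $u$ up to $\partial K$, the precise definition of $E$, and controlling $K \setminus E$ asymptotically should all go through by direct transcription of Brendle's argument.
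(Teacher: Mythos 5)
Your overall strategy is the right one and matches the paper's: scale, solve a weighted Neumann problem, transport via $\Phi_t(x) = \exp_x(tDu(x))$ restricted to a contact set, and prove a weighted Jacobian bound whose completion of squares forces the appearance of $\tfrac{1}{\alpha}D\log w\otimes D\log w$. Two of the details you float, however, are off and would need to be repaired.

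First, the Neumann problem. You propose $L_w u = h$ with $\varphi = f$ and $h \propto f^{1/(m+\alpha-1)}$, with the $\int_K w|Df|$ term supposedly emerging later. In fact the problem must be taken as
\begin{align*}
\divA(wf\,Du) = (m+\alpha)\,wf^{\frac{m+\alpha}{m+\alpha-1}} - w|Df| \ \text{ in } K, \qquad \langle Du,\nu\rangle = 1 \ \text{ on } \partial K,
\end{align*}
with $f$ \emph{inside} the divergence and constant boundary data. Integrating this identity over $K$ gives the compatibility condition $\int_{\partial K} wf + \int_K w|Df| = (m+\alpha)\int_K wf^{\frac{m+\alpha}{m+\alpha-1}}$, which is exactly the scaling normalization; and expanding the divergence and using Cauchy--Schwarz ($|Df| + \langle Df,Du\rangle \geq 0$ when $|Du|\le 1$) is what yields the crucial pointwise bound $\Delta u + \langle D\log w, Du\rangle \leq (m+\alpha)f^{1/(m+\alpha-1)}$ on the contact set. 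Your formulation with $\varphi = f$ does not reproduce either of these.

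Second, the Jacobian estimate. The guiding intuition of ``appending an $\alpha$-dimensional formal factor $e^{t\langle D\log w, Du\rangle/\alpha}$ and applying AM--GM'' correctly identifies where the Bakry--Émery correction and the effective dimension $m+\alpha$ come from, but as stated it is only a linearization at $t=0$: on a curved manifold $D\Phi_t$ is not $\operatorname{Id} + tD^2u$, and $w(\Phi_t)/w$ is not $e^{t\langle D\log w, Du\rangle}$. The rigorous version is the Riccati comparison along $\bar\gamma(t)=\exp_x(tDu(x))$: set $P(t)$ to be the matrix of Jacobi fields with $P(0)=\operatorname{Id}$, $P'(0)=D^2u$, and $Q = P^{-1}P'$; then $G(t) := \trace Q(t) + \langle D\log w(\bar\gamma(t)), \bar\gamma'(t)\rangle$ satisfies, after exactly your completion-of-squares step,
\begin{align*}
G'(t) \leq -\frac{1}{m+\alpha}\,G(t)^2,
\end{align*}
using $\Ric_w^\alpha\ge 0$ and $\trace(Q^2)\ge\frac{1}{m}(\trace Q)^2$. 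This is equivalent to concavity of $\bigl(w(\bar\gamma(t))\det P(t)\bigr)^{1/(m+\alpha)}$, which combined with the initial bound $G(0)\leq(m+\alpha)f^{1/(m+\alpha-1)}$ gives $w(\Phi_t)\,|\det D\Phi_t| \leq \bigl(1+tf^{1/(m+\alpha-1)}\bigr)^{m+\alpha}w$. So the algebraic step you predicted is right, but it must be run inside the ODE along the whole geodesic, not as a one-shot pointwise AM--GM.
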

 
 \newpage
  
  By setting $f = 1$ in Theorem \ref{Sobolev} we obtain
  the following sharp isoperimetric inequality:
  \begin{corollary} \ \\
  Suppose $(M,g)$ is a smooth complete noncompact Riemannian manifold with positive smooth density $w$ and nonnegative Bakry-Émery Ricci curvature $\Ric_{\alpha}^w$.
  Let $K \subset M$ be a compact subdomain in $M$ with smooth boundary $\partial K$.
  Then we have
   \begin{equation}
    \int_{\partial K} w
    \geq   (m+\alpha) \, \mathcal{V}_{\alpha}^{\frac{1}{m+\alpha}}
    \left( \int_{K} w \right)^{\frac{n+\alpha-1}{n+\alpha}}.
   \end{equation}
  \end{corollary}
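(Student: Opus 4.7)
The corollary is advertised as an immediate consequence of Theorem \ref{Sobolev} by the choice $f \equiv 1$, so my plan is simply to carry out this substitution and verify that each term reduces to the claimed form; no additional geometric input is needed beyond what Theorem \ref{Sobolev} already provides.

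Concretely, I would start from the inequality
\[
 \int_{K} w |Df| + \int_{\partial K} w f
 \geq (m+\alpha)\,\mathcal{V}_{\alpha}^{\frac{1}{m+\alpha}}
 \left( \int_K w f^{\frac{m+\alpha}{m+\alpha-1}} \right)^{\frac{m+\alpha-1}{m+\alpha}},
\]
observe that $f \equiv 1$ is admissible (it is smooth and positive on $K$), and then evaluate each term in turn. Since $Df = 0$, the first term on the left-hand side vanishes, while the boundary term becomes $\int_{\partial K} w$. On the right-hand side, $f^{(m+\alpha)/(m+\alpha-1)} \equiv 1$, so the bracketed integral collapses to $\int_K w$, and the exponent $(m+\alpha-1)/(m+\alpha)$ is unchanged. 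Substituting these identifications into the Sobolev inequality yields exactly the stated isoperimetric bound.

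The only conceptual point to flag is the typographical discrepancy in the corollary's statement (the exponent is written with $n$ instead of $m$); the argument above makes clear that the correct exponent is $(m+\alpha-1)/(m+\alpha)$, matching the dimension appearing throughout the paper. There is no genuine obstacle in this proof: all the analytic and geometric work has been absorbed into Theorem \ref{Sobolev}, and the corollary is a direct specialization. If one wanted to emphasize sharpness, one could additionally remark that in the Euclidean model case with $w \equiv 1$ and $\alpha$ appropriately interpreted, equality is achieved on balls, but this is not required for the statement itself.
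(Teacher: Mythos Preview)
Your proposal is correct and matches the paper's own approach exactly: the corollary is obtained from Theorem~\ref{Sobolev} by setting $f\equiv 1$, and the paper says precisely this in the sentence introducing the corollary without providing any further proof. Your remark about the $n$/$m$ typo in the exponent is also accurate.
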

  
 In the case of nonnegative Ricci curvature (corresponding to a constant density $w=1$)
 the above inequality was obtained by S.~Brendle
 \cite[Theorem 1.1]{Brendle-NonnegativeCurvature}.
 Previously V.~Agostiniani, M.~Fogagnolo and L.~Mazzieri
 \cite{Agostiniani-Fogagnolo-Mazzieri} 
 proved the inequality in the three-dimensional case by proving a Willmore type inequality.
 Their proof builds on an argument by G.~Huisken \cite{Huisken}.
 Very recently M.~Fogagnolo and L.~Mazzieri \cite{Fogagnolo-Mazzieri} extended the argument up to dimension seven.
  
 Our proof will use the Alexandrov-Bakelman-Pucci (ABP) maximum principle
 as in \cite{Brendle-NonnegativeCurvature}.
 A proof of the isoperimetric inequality in $\R^n$ using this method was first observed by X.~Cabre \cite{Cabre1, Cabre2}, see also work of 
 X.~Cabre, X.~Ros-Oton and J.~Serra
 \cite{Cabre-Ros-Oton-Serra} for various extensions.
 In a recent breakthrough S.~Brendle \cite{Brendle-Submanifold} used this method to prove a sharp Michael--Simon--Sobolev inequality (and hence a sharp isoperimetric inequality) for submanifolds.

 In Section \ref{Proof-FirstMainResult} we discuss the proof of Theorem \ref{Sobolev}.
 In the appendix we provide a direct proof of the Bishop--Gromov volume comparison theorem for smooth manifolds with densities.
  
 \section{Proof of Theorem \ref{Sobolev}} 
 \label{Proof-FirstMainResult}

  We may assume by scaling
  \begin{align*}
   \int_K w |D f| + \int_{\partial K} w f
   =
   (m+\alpha) \int_K w f^{\frac{m+\alpha}{m+\alpha-1}}.
  \end{align*}

Suppose $K$ is a compact domain with smooth boundary $\partial K$. 
We consider the linear Neumann problem given by
\begin{align*}
 \left\{
  \begin{aligned}
   \divA \left( w f D u \right)
   &= (m+\alpha) w f^{\frac{m+\alpha}{m+\alpha -1}} - w |D f| & \text{in} \; K, \\
   \langle Du, \nu \rangle &= 1
   & \text{on} \; \partial K,
  \end{aligned}
 \right.
\end{align*}
  where $\nu$ denotes the outward pointing unit normal vector field of $K$.
  
  The scaling assumption provides the integrability condition for this Neumann problem.
  Since $f$ is smooth, we have $|D f| \in C^{0,1}$ and hence by standard elliptic theory (see for example Theorem 6.31 in \cite{Gilbarg-Trudinger})
  we conclude $u \in C^{2,\gamma}$ for any $0 < \gamma < 1$.

As in \cite{Brendle-NonnegativeCurvature} we define the subset $U \subset K$ by
\begin{align*}
 U 
 =
 \left\{
 x \in K \backslash \partial K | \;
 |D u(x)| < 1
 \right\}.
\end{align*}

For any $r > 0$ we define a subset $A_r$ by
\begin{align*}
 A_r 
 = \left\{ \bar{x} \in U \left| \forall x \in K:
 r u(x) + \frac{1}{2} d \left( x, \exp_{\bar{x}} (r D u(\bar{x})) \right)^2
 \geq r u(\bar{x}) + \frac{1}{2} r^2 |D u(\bar{x})|^2 \right.
 \right\}.
\end{align*}

Let us denote the exponential map at $x$ by $\exp_x: T_x M \rightarrow M$. 
For any $r > 0$ we define the transport map $\Phi_r: K \rightarrow M$ by
\[
 \Phi_r(x) = \exp_x \left( r D u(x) \right).
\]

The transport map is of class $C^{1,\gamma}$ for any $0 < \gamma < 1$ by the above
considerations on the regularity of the solution $u$ of the Neumann problem.

We observe the following inequality (compare with Lemma 2.1 in \cite{Brendle-NonnegativeCurvature}):
\begin{lemma} \ \\
 Assume $x \in U$. Then we have the inequality
 \begin{equation*}
  w \Delta u + \langle D w, D u \rangle
  \leq (m+\alpha) w f^{\frac{1}{m+\alpha-1}}.
 \end{equation*}
 \label{Lemma_EstimateLaplace_u}
\end{lemma}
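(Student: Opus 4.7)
The plan is to start directly from the Neumann PDE satisfied by $u$, expand the divergence using the product rule, and then use the defining property $|Du|<1$ of the set $U$ to discard the "bad" terms via Cauchy--Schwarz.

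More concretely, I would first rewrite
\[
\divA(w f\, Du) = w f\, \Delta u + f\, \langle Dw, Du\rangle + w\, \langle Df, Du\rangle,
\]
so that the Neumann equation becomes
\[
w f\, \Delta u + f\, \langle Dw, Du\rangle + w\, \langle Df, Du\rangle
= (m+\alpha)\, w f^{\frac{m+\alpha}{m+\alpha-1}} - w\, |Df|.
\]
Since $f>0$, I would divide by $f$ to obtain
\[
w\, \Delta u + \langle Dw, Du\rangle
= (m+\alpha)\, w f^{\frac{1}{m+\alpha-1}}
- \frac{w}{f}\bigl( |Df| + \langle Df, Du\rangle \bigr).
\]

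Now on the set $U$ we have $|Du(x)|<1$, so Cauchy--Schwarz gives
\[
\langle Df, Du\rangle \geq -|Df|\,|Du| \geq -|Df|,
\]
hence $|Df| + \langle Df, Du\rangle \geq 0$. Because $w/f>0$, the last term in the displayed identity is nonpositive, and the claimed inequality
\[
w\, \Delta u + \langle Dw, Du\rangle \leq (m+\alpha)\, w f^{\frac{1}{m+\alpha-1}}
\]
follows immediately.

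There is no real obstacle here; the only thing to be careful about is the sign bookkeeping after dividing by $f$, and to note that one genuinely uses $|Du|\le 1$ (with equality allowed) so that the estimate extends continuously to $\partial U$ if needed. The computation is purely algebraic once the PDE is in hand, and no curvature assumption enters at this stage---the Bakry--Émery hypothesis will only be used later when the transport map $\Phi_r$ is analyzed.
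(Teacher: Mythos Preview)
Your argument is correct and is essentially identical to the paper's own proof: expand $\divA(wf\,Du)$ via the product rule, plug in the Neumann PDE, and use Cauchy--Schwarz together with $|Du|<1$ on $U$ to drop the term $|Df|+\langle Df,Du\rangle\geq 0$. The only cosmetic difference is that the paper first bounds and then divides by $f$, whereas you divide by $f$ first; the content is the same.
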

\begin{proof} \ \\
If we apply
the identity 
 \[
  \divA( w f D u)
  = w f \Delta u + f \langle D w, D u \rangle + w \langle D f, D u \rangle
 \]
to the PDE
 \[
  \divA(wf D u)
  = (m+\alpha) w f^{\frac{m+\alpha}{m+\alpha-1}} - w |D f|,
 \]
we deduce by the Cauchy--Schwarz inequality
\begin{align*}
f ( w \Delta u + \langle D w, D u \rangle)
=
(m+\alpha) w f^{\frac{m+\alpha}{m+\alpha-1}} - w (|D f| + \langle D f, D u \rangle)
\leq
(m+\alpha) w f^{\frac{m+\alpha}{m+\alpha-1}}.
\end{align*}
Dividing by $f > 0$ yields the claim.
\end{proof}

We have the following observation on the image of the transport set:
\begin{lemma}[cf. S.Brendle, Lemma 2.2 in \cite{Brendle-NonnegativeCurvature}] \ \\
 The set
 \[
  \{ p \in M| \, d_g(x,p) < r \; \text{for all} \; x \in K \} 
 \]
is contained in the image $\Phi_r(A_r)$ of the transport map.
\label{Lemma_Inclusion_ABP_Map}
\end{lemma}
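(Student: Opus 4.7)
The plan is to adapt the standard ABP-style variational argument: given any $p \in M$ with $d_g(x,p) < r$ for every $x \in K$, construct a preimage in $A_r$ by minimizing a suitable auxiliary function over $K$. Concretely, I would define
\[
 \phi_p(x) = r u(x) + \tfrac{1}{2} d_g(x, p)^2,
\]
which is continuous on the compact set $K$ and hence attains a minimum at some $\bar{x} \in K$. Once I show that $\bar{x}$ lies in the interior of $K$ and satisfies $\Phi_r(\bar{x}) = p$, the inequality $\phi_p(x) \geq \phi_p(\bar{x})$ for all $x \in K$ is exactly the defining condition for $\bar{x} \in A_r$, because $\tfrac{1}{2} d_g(\bar{x}, p)^2 = \tfrac{1}{2} r^2 |Du(\bar{x})|^2$ follows from $\Phi_r(\bar{x}) = p$.

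To rule out $\bar{x} \in \partial K$, I would test $\phi_p$ against the inward variation $\exp_{\bar{x}}(-t\nu)$ and combine the Neumann boundary condition $\langle Du, \nu\rangle = 1$ with the first variation of arclength to deduce $r \leq L \langle \nu, \xi \rangle \leq L$, where $L = d_g(\bar{x}, p)$ and $\xi$ is the initial tangent of some unit-speed minimizing geodesic from $\bar{x}$ to $p$. Since $L < r$ by hypothesis, this contradiction forces $\bar{x}$ to be an interior point.

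At an interior minimum, comparing $\phi_p$ along arbitrary variations $\exp_{\bar{x}}(tv)$, together with Cauchy--Schwarz and the elementary estimate $2rL|Du(\bar{x})| \leq r^2 |Du(\bar{x})|^2 + L^2$, pins down the equalities $rDu(\bar{x}) = L\xi$ and $r|Du(\bar{x})| = L$. In particular $|Du(\bar{x})| = L/r < 1$, so $\bar{x} \in U$, and $\Phi_r(\bar{x}) = \exp_{\bar{x}}(rDu(\bar{x})) = \exp_{\bar{x}}(L\xi) = p$, since $\xi$ is the initial velocity of a length-$L$ minimizing geodesic ending at $p$.

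The main obstacle is the case where $p$ lies on the cut locus of $\bar{x}$, so that $d_g(\cdot, p)^2$ is only semi-concave rather than smooth at $\bar{x}$. This is circumvented in the usual way: the upper Dini derivatives of the squared distance at $\bar{x}$ are bounded above by $-L \langle v, \xi \rangle$ for any minimizing geodesic of length $L$ with initial tangent $\xi$, and this one-sided estimate is exactly what is needed to drive both the boundary exclusion and the interior first-order argument sketched above.
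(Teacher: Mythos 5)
Your proposal correctly reconstructs the argument of Brendle's Lemma 2.2, which this paper cites and omits: minimize $x \mapsto ru(x) + \tfrac{1}{2}d_g(x,p)^2$ over the compact $K$, exclude the boundary via the Neumann condition and the first variation of arclength, and use the interior first-order conditions to extract $rDu(\bar{x}) = L\xi$, hence $\bar x \in U$, $\Phi_r(\bar x) = p$, and the defining inequality for $A_r$. This is the same approach as the paper's (omitted) proof, including the correct treatment of the cut-locus issue via one-sided upper derivative estimates for the squared distance.
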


\begin{proof} \ \\
 The proof is identical to Lemma 2.2 in \cite{Brendle-NonnegativeCurvature}.
 Hence we omit the details.
\end{proof}

For a point $\bar{x} \in A_r$ we define the path $\bar{\gamma}: [0,r] \rightarrow M$
by $\bar{\gamma}(t) := \exp_x \left( t D u(\bar{x}) \right)$.
We have the following formula for the second variation of energy along $\bar{\gamma}$: 
\begin{lemma}[cf. S.Brendle, Lemma 2.3 in \cite{Brendle-NonnegativeCurvature}] \ \\
Suppose $Z$ is a smooth vector field along $\bar{\gamma}$ vanishing at the end point
(ie. $Z(r) = 0$).
Then we have the second variation formula
\begin{align*}
 (D^2 u)(Z(0), Z(0))
 +
 \int_0^r 
 \left(
  |D_t Z(t)|^2 - \Rm(\bar{\gamma}'(t), Z(t), Z(t), \bar{\gamma}'(t))
 \right)
 \; dt \geq 0.
\end{align*}
\end{lemma}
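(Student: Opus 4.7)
\emph{Proof proposal.} The plan is to exploit the minimization built into the definition of $A_r$: since $\bar{x}\in A_r$, the function $x \mapsto r u(x) + \tfrac12 d(x, \Phi_r(\bar{x}))^2$ attains its minimum on $K$ at $\bar{x}$. I would probe this minimum against a one-parameter family of competitors built from the vector field $Z$ and then translate the resulting second-order inequality into the desired second variation estimate.

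First, set $\sigma(s) := \exp_{\bar{x}}(s Z(0))$ and define a variation of $\bar\gamma$ by
\[
\gamma_s(t) := \exp_{\bar\gamma(t)}\bigl(s Z(t)\bigr), \qquad t \in [0,r],
\]
so that $\gamma_0 = \bar\gamma$, $\gamma_s(0) = \sigma(s)$, and $\gamma_s(r) = \Phi_r(\bar{x})$ for every small $s$ (using $Z(r)=0$). Writing $E(s) := \tfrac12 \int_0^r |\partial_t \gamma_s(t)|^2 \, dt$, Cauchy--Schwarz gives $d(\sigma(s), \Phi_r(\bar{x}))^2 \leq L(\gamma_s)^2 \leq 2r\, E(s)$, and applying the defining inequality of $A_r$ at $x = \sigma(s)$ and dividing by $r$ yields
\[
u(\sigma(s)) + E(s) \geq u(\bar{x}) + E(0).
\]
At $s=0$ this chain of inequalities is saturated: the $A_r$ condition applied at $x=\bar x$ forces $d(\bar{x}, \Phi_r(\bar{x})) = r|Du(\bar{x})|$, and $\bar\gamma$ has constant speed $|Du(\bar{x})|$. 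Hence $s \mapsto u(\sigma(s)) + E(s)$ attains a local minimum at $s=0$, and its second derivative there is nonnegative.

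To conclude, I would compute the two second derivatives at $s=0$. Since $\sigma$ is a geodesic, its covariant acceleration vanishes and $\frac{d^2}{ds^2}\big|_{0} u(\sigma(s)) = D^2 u(Z(0), Z(0))$. For $E$, the standard second variation of energy along the geodesic $\bar\gamma$ gives
\[
E''(0) = \int_0^r \bigl( |D_t Z(t)|^2 - \Rm(\bar\gamma'(t), Z(t), Z(t), \bar\gamma'(t)) \bigr) \, dt + \bigl\langle D_s \partial_s \gamma_s, \partial_t \gamma_s \bigr\rangle \Big|_{t=0}^{t=r}.
\]
The $t=r$ boundary term vanishes because $\gamma_s(r)$ is constant in $s$, and the $t=0$ boundary term vanishes because $\gamma_s(0) = \sigma(s)$ is a geodesic. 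Adding the two contributions yields the inequality in the lemma. The main technical point is the careful verification that both boundary terms disappear, which hinges on the two design choices --- fixing the right endpoint of $\gamma_s$ and letting the left endpoint trace out a geodesic --- and this is precisely why $\sigma$ is chosen to be the exponential curve of $Z(0)$ rather than an arbitrary curve with that initial velocity.
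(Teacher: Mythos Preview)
Your proposal is correct and follows the same strategy as Brendle's Lemma~2.3, to which the present paper simply defers without reproducing details: exploit the minimality in the definition of $A_r$ by testing against a variation of $\bar\gamma$ with variation field $Z$, replace the squared distance by the energy via Cauchy--Schwarz (with equality at $s=0$ because $\bar\gamma$ is a constant-speed minimizer), and read off the second variation of energy with vanishing boundary terms. No gaps; the only minor remark is that $\sigma(s)\in K$ for small $s$ since $\bar x\in U\subset K\setminus\partial K$, which you implicitly use when invoking the $A_r$ inequality at $x=\sigma(s)$.
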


\begin{proof} \ \\
 The proof is identical to Lemma 2.3 in \cite{Brendle-NonnegativeCurvature}.
 Hence we omit the details.
\end{proof}

The next step is to prove
a vanishing result for Jacobi fields
along the geodesic $\bar{\gamma}$:
\begin{lemma}[cf. S.Brendle, Lemma 2.4 in \cite{Brendle-NonnegativeCurvature}] \ \\
 Choose an orthonormal basis of the tangent space $T_{\bar{x}} M$.
 Suppose $W$ is a Jacobi field along $\bar{\gamma}$ satisfying
 \[
   \langle D_t W(0), e_j \rangle = (D^2 u)(W(0), e_j)
 \]
 for $1 \leq j \leq m$.
 If there exists $\tau \in (0,r)$, such that $W(\tau) = 0$, then $W$ vanishes identically.
\end{lemma}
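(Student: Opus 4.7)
The plan is to apply the second variation formula of the preceding lemma to the piecewise-smooth extension of $W$ by zero past $\tau$, and then to extract from the resulting equality case that $D_t W(\tau) = 0$ as well, which forces $W \equiv 0$ by uniqueness of Jacobi fields.

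First, I would define a vector field $Z$ along $\bar\gamma$ by $Z(t) = W(t)$ for $t \in [0, \tau]$ and $Z(t) = 0$ for $t \in [\tau, r]$. Since $W(\tau) = 0$, $Z$ is continuous and piecewise smooth, and $Z(r) = 0$. Although the preceding lemma is stated for smooth test fields, it extends to such piecewise-smooth $Z$: mollifying near $t = \tau$ produces smooth approximants converging to $Z$ in $H^1([0,r])$, and the inequality passes to the limit since every term in the index form is continuous in the $H^1$ topology.

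Second, integration by parts on $[0,\tau]$, together with $W(\tau) = 0$ and the Jacobi equation, yields
\begin{align*}
 \int_0^\tau \bigl( |D_t W|^2 - \Rm(\bar\gamma', W, W, \bar\gamma') \bigr)\, dt = -\langle D_t W(0), W(0)\rangle.
\end{align*}
Feeding this into the second variation inequality for $Z$ gives
\begin{align*}
 (D^2 u)(W(0), W(0)) - \langle D_t W(0), W(0)\rangle \geq 0.
\end{align*}
Expanding $W(0) = \sum_j \langle W(0), e_j\rangle e_j$ and applying the hypothesis $\langle D_t W(0), e_j\rangle = (D^2 u)(W(0), e_j)$ shows the left-hand side is precisely zero, so the second variation inequality is in fact an equality.

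Third, the saturated inequality means that $Z$ minimizes the quadratic form
\begin{align*}
 I(V) := (D^2 u)(V(0), V(0)) + \int_0^r \bigl( |D_t V|^2 - \Rm(\bar\gamma', V, V, \bar\gamma') \bigr)\, dt
\end{align*}
among piecewise-smooth test fields $V$ with $V(r) = 0$. Hence for every smooth such $V$, the linear term of $\epsilon \mapsto I(Z + \epsilon V)$ at $\epsilon = 0$ must vanish. Integrating this linear term by parts on $[0, \tau]$ and $[\tau, r]$ separately, using that $W$ satisfies the Jacobi equation on the first piece and $Z \equiv 0$ on the second, and cancelling the boundary contribution at $t = 0$ via the hypothesis, I would be left with $\langle D_t W(\tau), V(\tau) \rangle = 0$ for every admissible $V$. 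Choosing $V$ with $V(\tau) = D_t W(\tau)$ then forces $D_t W(\tau) = 0$; combined with $W(\tau) = 0$, the uniqueness of solutions to the Jacobi equation yields $W \equiv 0$. The only slightly delicate point is the extension of the second variation inequality from smooth to piecewise-smooth test fields via mollification; the remaining steps are routine integration by parts and linear algebra.
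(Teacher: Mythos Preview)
Your argument is correct and follows essentially the same route as Brendle's Lemma~2.4, which is precisely what the paper defers to: extend $W$ by zero past $\tau$, observe that this piecewise-smooth field saturates the index-form inequality of the preceding lemma, and then use the equality case (via first variation, equivalently the standard broken-Jacobi-field argument) to force $D_t W(\tau)=0$, hence $W\equiv 0$ by uniqueness. The mollification step you flag is indeed the only point needing a word of justification, and it is routine.
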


\begin{proof} \ \\
 The proof is identical to Lemma 2.4 in \cite{Brendle-NonnegativeCurvature}.
 Hence we omit the details.
\end{proof}

The next proposition describes the volume expansion along the transport map $\Phi_t$:
\begin{proposition} \ \\
Assume that $\bar{x} \in A_r$. Then the map
 \[
  t \mapsto
   \left(1+ t f(\bar{x})^{\frac{1}{m+\alpha -1}} \right)
 w\left( \Phi_t(\bar{x}) \right) \; |\det D\Phi_t(\bar{x})| 
 \]
is monotone decreasing for $t \in (0,r)$.
\label{Proposition:VolumeExpansion}
\end{proposition}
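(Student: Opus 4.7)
The plan is to reduce the monotonicity to an ODE inequality for $\Psi(t) := \log\!\bigl(w(\Phi_t(\bar x))\,|\det D\Phi_t(\bar x)|\bigr)$ along the geodesic $\bar\gamma$, using Jacobi field analysis, the Bakry--Émery curvature bound, and the Laplacian estimate from Lemma~\ref{Lemma_EstimateLaplace_u}.

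First I would set up the Riccati framework. By the preceding Jacobi-field vanishing lemma, no Jacobi field with the prescribed initial data vanishes on $(0,r)$, so $D\Phi_t(\bar x)$ is nonsingular there. Parametrise $D\Phi_t(\bar x)$ through the Jacobi fields $J_i$ along $\bar\gamma$ with $J_i(0) = e_i$ and $D_t J_i(0) = (D^2 u)(e_i,\cdot)^{\sharp}$, and introduce the endomorphism $S(t)$ of $T_{\bar\gamma(t)}M$ characterised by $D_t J_i(t) = S(t) J_i(t)$. The standard Jacobi computation yields
\[
 \tfrac{d}{dt}\log|\det D\Phi_t(\bar x)| = \operatorname{tr} S(t), \qquad S'(t) + S(t)^2 + \mathcal{R}(t) = 0,
\]
where $\mathcal{R}(t)v = \Rm(v,\bar\gamma'(t))\bar\gamma'(t)$. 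Writing $\theta(t):=\operatorname{tr} S(t)$ and $\eta(t):=\langle D\log w,\bar\gamma'(t)\rangle$, this gives $\Psi'(t) = \theta(t) + \eta(t)$; differentiating once more and using that $\bar\gamma$ is a geodesic yields $\eta'(t) = D^2(\log w)(\bar\gamma',\bar\gamma')$ and $\theta'(t) = -\operatorname{tr}(S(t)^2) - \Ric(\bar\gamma'(t),\bar\gamma'(t))$.

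Next I would invoke $\Ric_w^{\alpha}\geq 0$, which rewrites as $\Ric(\bar\gamma',\bar\gamma')\geq D^2(\log w)(\bar\gamma',\bar\gamma') + \tfrac{1}{\alpha}\eta(t)^2$, producing
\[
 \Psi''(t) \leq -\operatorname{tr}(S(t)^2) - \tfrac{1}{\alpha}\eta(t)^2.
\]
The key algebraic step is a double Cauchy--Schwarz: on the one hand $\operatorname{tr}(S^2) \geq \theta^2/m$, and on the other hand $\theta^2/m + \eta^2/\alpha \geq (\theta+\eta)^2/(m+\alpha)$ (the latter reducing to $(\alpha\theta-m\eta)^2\geq 0$). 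These combine to
\[
 \Psi''(t) \leq -\frac{\Psi'(t)^2}{m+\alpha}.
\]

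Finally I would use the initial condition $\Psi'(0) \leq (m+\alpha)\,f(\bar x)^{1/(m+\alpha-1)}$ obtained by dividing the inequality of Lemma~\ref{Lemma_EstimateLaplace_u} by $w$, and compare $\Psi'$ with the explicit solution $\phi(t):=(m+\alpha)\,f(\bar x)^{1/(m+\alpha-1)}/\bigl(1+t\,f(\bar x)^{1/(m+\alpha-1)}\bigr)$ of the equality $\phi' = -\phi^2/(m+\alpha)$. Standard ODE comparison then forces $\Psi'(t)\leq \phi(t)$ on $(0,r)$, and integration yields the claimed monotonicity (with the factor $1+t\,f(\bar x)^{1/(m+\alpha-1)}$ entering to the power $m+\alpha$ as required for the volume estimate). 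The main obstacle is the precise coupling in the Cauchy--Schwarz step: without the $\tfrac{1}{\alpha}\eta^2$ contribution supplied by the weighted curvature hypothesis, the naïve bound $\operatorname{tr}(S^2)\geq \theta^2/m$ would only yield $\Psi''\leq -\Psi'^2/m$, which is incompatible with the weighted Laplacian initial bound and would fail to produce the effective dimension $m+\alpha$.
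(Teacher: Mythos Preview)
Your proposal is correct and follows essentially the same route as the paper: set up the Riccati equation for the Jacobi matrix along $\bar\gamma$, combine the trace inequality $\operatorname{tr}(S^2)\geq\theta^2/m$ with the $\tfrac{1}{\alpha}\eta^2$ term supplied by $\Ric_w^\alpha\geq 0$ via the elementary identity $\tfrac{\theta^2}{m}+\tfrac{\eta^2}{\alpha}\geq\tfrac{(\theta+\eta)^2}{m+\alpha}$, feed in the initial bound from Lemma~\ref{Lemma_EstimateLaplace_u}, and conclude by ODE comparison. The only point worth making explicit is the symmetry of your endomorphism $S(t)$ (the paper's $Q(t)$), which is needed for the trace inequality and follows from the symmetry of $D^2u$ at $t=0$ together with the curvature symmetries; you also correctly note that the monotone quantity carries the exponent $-(m+\alpha)$, matching the paper's proof (the exponent is dropped in the proposition statement itself).
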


\begin{proof} \ \\
Let $\bar{x} \in A_r$, define $\bar{\gamma}: [0,r] \rightarrow M$ by $\bar{\gamma}(t) = \exp_{\bar{x}}(t D u(\bar{x}))$.
Choose an orthonormal basis $\{e_1, \dots, e_m\}$ of the tangent space $T_{\bar{x}} M$,
and construct geodesic normal coordinates $(x^1, \dots, x^m)$ around $\bar{x}$, such that we
have $\partial_i = e_i$ at $\bar{x}$.

We construct for $1 \leq i \leq m$ vector fields $E_i$ along $\bar{\gamma}$ by
parallel transport of the vector fields $e_i$. Moreover, we solve the Jacobi equation
to obtain the unique Jacobi fields $X_i$ along $\bar{\gamma}$ satisfying $X_i(0) = e_i$ and
\[
 \langle D_t X_i (0), e_j \rangle = (D^2 u)(e_i, e_j),
\]
where $D_t$ denotes the covariant derivative along $\bar{\gamma}$.

Let us define a matrix-valued 
function $P: [0,\tau] \rightarrow \Mat(m; \R)$ by
\[
 [P(t)]_{ij}
 =
 \langle X_i(t), E_j(t) \rangle.
\]
We observe by the above properties:
\[
 [P(0)]_{ij} = \delta_{ij} \; \text{and} \; [P'(0)]_{ij} = (D^2 u)(e_i, e_j).
\]

Additionally, we define a matrix-valued function $S: [0,\tau] \rightarrow \Mat(m; \R)$ by 
\[
 [S(t)]_{ij}
 = \Rm \left( \bar{\gamma}'(t), E_i(t), E_j(t), \bar{\gamma}'(t) \right),
\]
where $\Rm$ denotes the Riemann curvature tensor.
For each $t \in [0,\tau]$ the matrix $S(t)$ is symmetric due to the symmetries
of the Riemann curvature tensor.
We have
\[
 \trace S(t) = \Ric\left( \bar{\gamma}'(t), \bar{\gamma}'(t) \right).
\]
By the Jacobi equation for the Jacobi vector fields $X_1, \dots, X_n$ we obtain
\[
 P''(t) = - P(t) S(t).
\]
Moreover, $P'(t) P(t)^T$ is symmetric for each $t \in [0,r]$,
and $P(t)$ is invertible for each $t \in [0,r]$.
We define a matrix-valued function $Q: [0,\tau] \rightarrow \Mat(m; \R)$
by
\[
Q(t)
 = P(t)^{-1} P'(t).
\]
Then $Q(t)$ is symmetric for each $t \in [0,r]$ and it satisfies the Ricci equation
\[
 \frac{d}{dt} Q(t) = - S(t) - Q(t)^2.
\]

We compute
\begin{align*}
 \frac{d}{dt} \trace Q(t)
 &=
 \trace (-S(t) - Q(t)^2) 
 = - \Ric(\bar{\gamma}'(t), \bar{\gamma}'(t))
 - \trace [ Q(t)^2].
\end{align*}

We recall the definition of the Bakry-Émery Ricci curvature:
\[
 \Ric_w^\alpha
 =
 \Ric - D^2 (\log w) - \frac{1}{\alpha} D(\log w) \otimes D (\log w).
\]

Thus
\begin{align*}
 \frac{d}{dt} \trace Q(t)
 =& 
 - (D^2 \log w)(\bar{\gamma}'(t), \bar{\gamma}'(t))
 - \frac{1}{\alpha}
  \langle D (\log w)(\bar{\gamma}(t)), \bar{\gamma}'(t) \rangle^2 
 - \Ric_{\alpha}^w( \bar{\gamma}'(t), \bar{\gamma}'(t))
 -  \trace [Q(t)^2].
\end{align*}

We observe the differential inequality
\begin{align*}
 & \frac{d}{dt} \left[ \trace Q(t) + \langle D \log w(\bar{\gamma}(t)), \bar{\gamma}'(t) \rangle \right]\\
 =& 
 - \frac{1}{\alpha}
  \langle D \log w(\bar{\gamma}(t)), \bar{\gamma}'(t) \rangle^2
 - 
 \Ric_{\alpha}^w (\bar{\gamma}'(t), \bar{\gamma}'(t))
 - \trace [Q(t)^2] \\
 \leq&
 - \frac{1}{m} [\trace Q(t)]^2
 - \frac{1}{\alpha}
  \langle D \log w(\bar{\gamma}(t)), \bar{\gamma}'(t) \rangle^2 \\
 =& 
 - \frac{1}{m+\alpha}
  \left( \trace Q(t) + \langle D \log w(\bar{\gamma}(t)), \bar{\gamma}'(t) \rangle \right)^2
- \frac{m}{\alpha( m+\alpha)}
\left(
 \frac{\alpha}{m} \trace Q(t) -  \langle D \log w(\bar{\gamma}(t)), \gamma'(t) \rangle
\right)^2
  \\
 \leq&
 - \frac{1}{m+\alpha} [ \trace Q(t) + \langle D \log w(\bar{\gamma}(t)), \bar{\gamma}'(t) \rangle]^2,
\end{align*}
where we used our assumption of nonnegative Bakry-Émery Ricci curvature (ie. $\Ric_{\alpha}^w \geq 0$) and the trace inequality for symmetric matrices.

With the help of Lemma \ref{Lemma_EstimateLaplace_u} we observe that the initial value
to the above differential inequality is given by
\[
 \lim_{t \rightarrow 0} [ \trace Q(t) + \langle D \log w(\bar{\gamma}(t)), \bar{\gamma}'(t) \rangle]
 =
 \Delta u(\bar{x}) + \langle D \log w(\bar{x}), \log u(\bar{x}) \rangle
 \leq (m+\alpha)  f(\bar{x})^{\frac{1}{m+\alpha-1}}.
\]

If we apply the comparison principle for ODEs we deduce
the bound
\begin{align*}
   \trace Q(t) + \langle D \log w(\bar{\gamma}(t)), \bar{\gamma}'(t) \rangle \leq \frac{(m+\alpha) f(\bar{x})^{\frac{1}{m+\alpha-1}}}{1 + t f(\bar{x})^{\frac{1}{m+\alpha-1}}}.
\end{align*}

Then we have
\begin{align*}
 \frac{d}{dt} \log \left[  w (\bar{\gamma}(t)) \det P(t) \right] 
 =
   \trace Q(t) + \langle D \log w(\bar{\gamma}(t)), \bar{\gamma}'(t) \rangle
 \leq 
 \frac{(m+\alpha) f^{\frac{1}{m+\alpha-1}}}{1 + t f^{\frac{1}{m+\alpha-1}}}.
\end{align*}

This implies that the function
\[
 t \mapsto
 \left(1+ t f(\bar{x})^{\frac{1}{m+\alpha -1}} \right)^{-(m+\alpha)}
 w(\bar{\gamma}(t)) \det P(t)
\]
is monotone decreasing for $t \in (0,r)$.
The proposition follows by observing that $\det P(t) = |\det D \Phi_t(\bar{x})|$
for any $t \in (0,r)$.
\end{proof}

\begin{corollary} \ \\
 We have for any $x \in A_r$ 
 the relation
 \begin{align*}
  w(\Phi_r(x)) \, |\det D\Phi_r(x)| \leq \left( 1 + r f(x)^{\frac{1}{m+\alpha -1}} \right)^{m+\alpha} w(x).
 \end{align*}
\label{Corollary_Volume_Expansion}
\end{corollary}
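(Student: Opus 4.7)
The plan is to apply Proposition \ref{Proposition:VolumeExpansion} directly, using it at the two endpoints $t=0$ and $t=r$. Reading the proof of the proposition, what is actually established is that the function
\[
 t \mapsto \left(1 + t f(\bar{x})^{\frac{1}{m+\alpha-1}}\right)^{-(m+\alpha)} w(\Phi_t(\bar{x})) \, |\det D\Phi_t(\bar{x})|
\]
is monotone decreasing on $(0,r)$, and this is the form that yields the stated bound.

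First I would compute the initial value at $t = 0$. Since $\Phi_0(x) = \exp_x(0) = x$ and the differential $D\Phi_0(x)$ is the identity on $T_x M$, we have $|\det D\Phi_0(x)| = 1$, while the prefactor $(1 + 0 \cdot f(x)^{1/(m+\alpha-1)})^{-(m+\alpha)} = 1$. Thus the limiting value as $t \downarrow 0$ is simply $w(x)$.

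Next, by the monotonicity established in Proposition \ref{Proposition:VolumeExpansion}, for any $x \in A_r$ the value of the function at $t = r$ is bounded above by its value at $t = 0$:
\[
 \left(1 + r f(x)^{\frac{1}{m+\alpha-1}}\right)^{-(m+\alpha)} w(\Phi_r(x)) \, |\det D\Phi_r(x)| \leq w(x).
\]
Rearranging by multiplying through by $(1 + r f(x)^{1/(m+\alpha-1)})^{m+\alpha}$ gives the claim. There is no real obstacle here: the corollary is essentially just a restatement of the proposition evaluated at the endpoints, so the only care required is to track the correct exponent and to verify the initial condition $|\det D\Phi_0(x)| = 1$.
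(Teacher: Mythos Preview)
Your proposal is correct and matches the paper's approach: the paper states the corollary without proof, treating it as the immediate consequence of Proposition~\ref{Proposition:VolumeExpansion} obtained by comparing the monotone quantity at $t=0$ (where it equals $w(x)$ since $\Phi_0(x)=x$ and $D\Phi_0(x)=\mathrm{id}$) and at $t=r$. You also correctly noted the typo in the proposition's displayed statement and used the form with exponent $-(m+\alpha)$ that is actually proved.
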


We complete the proof of Theorem \ref{Sobolev}: \ \\
Indeed by Lemma \ref{Lemma_Inclusion_ABP_Map}, the change of variables formula and Corollary \ref{Corollary_Volume_Expansion} we have
\begin{align*}
 &\int_{\left\{ p \in M | d_g(x,p) < r \; \text{for all} \; x \in D \right\}} w \; d\mu(x) \\
 \leq&
 \int_{A_r} |\det D\Phi_r(x)| w(\Phi_r(x)) \; d\mu(x) \\
 \leq&
 \int_U \left(1+r f(x)^{\frac{1}{m+\alpha-1}} \right)^{m+\alpha} w(x) \; d\mu(x)
\end{align*}

If we divide by $r^{m+\alpha}$ and send $r \rightarrow \infty$ we obtain
\begin{align*}
\mathcal{V}_{\alpha}
=& 
 \lim_{r \rightarrow \infty} 
 \frac{1}{r^{m+\alpha}}
 \int_{\left\{ p \in M | d_g(x,p) < r \; \text{for all} \; x \in K \right\}} w \; d\mu(x)\\
 \leq& \int_U f(x)^{\frac{m+\alpha}{m+\alpha - 1}} w(x) \; d\mu(x) \\
 \leq& \int_D f(x)^{\frac{m+\alpha}{m+\alpha - 1}} w(x) \; d\mu(x) 
\end{align*}

If we combine the previous estimate with our initial scaling assumption, we deduce
\begin{align*}
 \int_K w |D f|
 + \int_{\partial K} wf
 &= (m+\alpha) \int_{K} wf^{\frac{m+\alpha}{m+\alpha-1}} \\
 &\geq (m+\alpha)
 \left( \int_K wf^{\frac{m+\alpha}{m+\alpha-1}} \right)^{\frac{1}{m+\alpha}}
 \left( \int_K wf^{\frac{m+\alpha}{m+\alpha-1}} \right)^{\frac{m+\alpha-1}{m+\alpha}} \\
 &\geq
 (m+\alpha) \mathcal{V}_{\alpha}^{\frac{1}{m+\alpha}}
  \left( \int_K wf^{\frac{m+\alpha}{m+\alpha-1}} \right)^{\frac{m+\alpha-1}{m+\alpha}}.
\end{align*}
This is the desired estimate.

\appendix

\section{A proof of a Bishop--Gromov volume comparison theorem for smooth manifolds
with densities}

We provide a short proof of the Bishop--Gromov volume comparison theorem
for smooth noncompact manifolds with $\Ric_{\alpha}^w \geq 0$.

\begin{theorem}[K.-T.~Sturm, Theorem 2.3 in \cite{Sturm}] \ \\
 Assume $(M,g)$ is a smooth complete noncompact $m$-dimensional smooth manifold, $\alpha > 0$, and $w$ is a smooth positive function.
 Suppose $(M,g,w)$ has nonnegative Bakry-Émery Ricci curvature with respect to
 the density $w$ and $\alpha > 0$, ie.
 \[
  \Ric_{\alpha}^w = \Ric - D^2 (\log w) - \frac{1}{\alpha} D \log w \otimes D \log w \geq 0.
 \]
 Then the function 
 \begin{align*}
  r \mapsto 
  \frac{1}{r^{m+\alpha}} \int_{B_r (q)} w, \; \text{where} \; B_r(q) 
  = \left\{ p \in M | d_g(p,q) < r \right\}
 \end{align*}
 is monotone decreasing for any $q \in M$.
\end{theorem}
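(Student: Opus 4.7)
The plan is to mimic the Ricatti analysis of Proposition~\ref{Proposition:VolumeExpansion}, but along radial geodesics emanating from the fixed base point $q$ rather than along the optimal transport map $\Phi_r$. Fix $q \in M$ and work in geodesic polar coordinates: for each unit vector $\theta \in T_q M$, write $\gamma_\theta(t) = \exp_q(t\theta)$, let $c(\theta) \in (0,\infty]$ be the cut time in direction $\theta$, and let $J(t,\theta) > 0$ be the Jacobian of $\exp_q$ so that $d\mu = J(t,\theta)\,dt\,d\theta$ on the complement of the cut locus. Set $F(t,\theta) := w(\gamma_\theta(t)) J(t,\theta)$ and $A_w(t) := \int_{\{\theta\,:\,c(\theta)>t\}} F(t,\theta)\,d\theta$. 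Since the cut locus has measure zero, the weighted ball volume factors as $\int_{B_r(q)} w = \int_0^r A_w(t)\,dt$.

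The heart of the argument is pointwise monotonicity of $t \mapsto F(t,\theta)/t^{m+\alpha-1}$ for each fixed $\theta$. To establish it, I reuse the setup from Proposition~\ref{Proposition:VolumeExpansion} along $\gamma_\theta$: let $E_1,\dots,E_{m-1}$ be parallel orthonormal fields perpendicular to $\gamma_\theta'$, let $Y_i$ be the Jacobi fields with $Y_i(0)=0$ and $D_t Y_i(0) = e_i$ (where $e_i := E_i(0)$), and form the $(m-1)\times(m-1)$ matrix $P(t) = [\langle Y_i(t), E_j(t)\rangle]$, so that $\det P(t) = J(t,\theta)$ with $P(0) = 0$ and $P'(0) = I$. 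Setting $Q(t) = P(t)^{-1} P'(t)$ and $\psi(t) := \trace Q(t) + \langle D \log w(\gamma_\theta(t)), \gamma_\theta'(t)\rangle$, the same Bakry--Émery computation as in the Proposition --- with $m$ replaced by $m-1$ in the trace inequality for the symmetric matrix $Q$ --- produces $\psi'(t) \leq -\psi(t)^2/(m+\alpha-1)$.

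The initial data now come not from Lemma~\ref{Lemma_EstimateLaplace_u} but from the Euclidean asymptotics in geodesic polar coordinates: $J(t,\theta) = t^{m-1}(1 + O(t^2))$ and smoothness of $w$ give $\psi(t) = (m-1)/t + O(1)$ as $t \to 0^+$. Integrating the reciprocal inequality $(1/\psi)' \geq 1/(m+\alpha-1)$ from $s$ to $t$ and letting $s \to 0^+$ yields $\psi(t) \leq (m+\alpha-1)/t$ on $(0, c(\theta))$, which is exactly $\frac{d}{dt}\log F(t,\theta) \leq (m+\alpha-1)/t$ and hence the claimed monotonicity of $F(t,\theta)/t^{m+\alpha-1}$.

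Integrating over $\theta$ --- using $\{c(\theta)>t\} \subseteq \{c(\theta)>s\}$ for $s<t$ to absorb the shrinking domain of integration --- one concludes that $A_w(t)/t^{m+\alpha-1}$ is nonincreasing. A short and standard calculus lemma (if $h(t)/t^{k-1}$ is nonincreasing then so is $r^{-k}\int_0^r h(t)\,dt$, applied with $k = m+\alpha$) upgrades this to monotonicity of $r \mapsto r^{-(m+\alpha)} \int_{B_r(q)} w$, which is the claim. I do not anticipate a serious obstacle: the curvature content is already packaged in the Bakry--Émery completion of the square carried out in the Proposition, and the only genuinely new items are the singular initial condition $\psi(t) \sim (m-1)/t$ at $t=0$ (which is precisely what powers the classical Bishop--Gromov argument) and the routine cut-locus bookkeeping.
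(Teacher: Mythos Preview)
Your proposal is correct and is essentially the same argument as the paper's: the paper writes the Riccati inequality in terms of the mean curvature $H$ of the geodesic sphere $\Sigma_t$, which is exactly your $\trace Q(t)$, obtains the same bound $H + \langle \gamma', D\log w\rangle \le (m+\alpha-1)/t$, and deduces that $t^{-(m+\alpha-1)}\int_{\Sigma_t} w$ (your $A_w(t)/t^{m+\alpha-1}$) is nonincreasing before passing to ball volumes via the coarea formula and the same calculus lemma (written there as the substitution $t=\tau r$). The only cosmetic difference is that you first record the pointwise monotonicity of $F(t,\theta)/t^{m+\alpha-1}$ before integrating over $\theta$, whereas the paper integrates one step earlier.
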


\begin{proof} \ \\
 We define $\Sigma_t \subset M$ by 
 \[
  \Sigma_t = \left\{ \exp_q (v)| \; v \in \seg^0(q) \; \text{and} \; |v| = t \right\}.
 \]
 The set $\Sigma_t$ is the distance set avoiding the cut locus, here $\seg^0(p)$
 denotes the interior segment domain. 
 We denote the second fundamental form of the hypersurface $\Sigma_t$ by $h$ and
 its trace, the mean curvature of the hypersurface $\Sigma_t$, by $H$.
 
 Let $v \in T_q M$, $|v| = 1$ and consider the radial geodesic $\gamma(t) = \exp_q (tv)$ such that $\gamma(t) \in \Sigma_t$.
 Then we have the Jacobi equation
 \[
  \left(\frac{d}{dt} H \right)(\gamma(t))
  = - |h|^2(\gamma(t)) - \Ric(\gamma'(t), \gamma'(t))
 \]
 provided $tv \in \seg^0(p)$. 
 This implies
 \begin{align*}
  &\frac{d}{dt} \left[ H(\gamma(t)) + \langle \gamma'(t), D \log w(\gamma(t)) \rangle \right] \\
  =&
  \left(\frac{d}{dt} H \right) (\gamma(t)) + (D^2 \log w)\left(\gamma'(t), \gamma'(t) \right) \\
  =&
  - |h|^2(\gamma(t)) - \Ric(\gamma'(t), \gamma'(t))
  + (D^2 \log w)\left(\gamma'(t), \gamma'(t) \right) \\
  =& - |h|^2(\gamma(t)) - \frac{1}{\alpha} \langle D \log w(\gamma'(t)), \gamma'(t) \rangle - \Ric_{\alpha}^w(\gamma'(t), \gamma'(t)) \\
  \leq&
  - \frac{1}{m-1} H^2(\gamma(t)) - \frac{1}{\alpha} \langle D \log w(\gamma'(t)), \gamma'(t) \rangle \\
  =&
     - \frac{1}{m-1+\alpha} \left[ H(\gamma(t)) + \langle \gamma'(t), D \log w(\gamma(t)) \rangle \right]^2 \\
     &- \frac{m-1}{\alpha(m-1+\alpha)}
     \left[
      \frac{\alpha}{m-1} H(\gamma(t)) - \langle \gamma'(t), D \log w(\gamma(t)) \rangle
     \right]^2
  \\
  \leq& 
   - \frac{1}{m-1+\alpha} \left[ H(\gamma(t)) + \langle \gamma'(t), D \log w(\gamma(t)) \rangle \right]^2,
\end{align*}
where we have used the trace inequality for the second fundamental form
and the nonnegativity of the Bakry--Émery Ricci curvature.

Thus we established the differential inequality
\begin{align*}
 \frac{d}{dt} \left[ H(\gamma(t)) + \langle \gamma'(t), D \log w(\gamma(t)) \rangle \right]
 \leq
 - \frac{1}{m-1+\alpha} \left[ H(\gamma(t)) + \langle \gamma'(t), D \log w(\gamma(t)) \rangle \right]^2.
\end{align*}

 Integrating the above differential inequality implies
 \begin{align*}
  H(\gamma(t)) + \langle \gamma'(t), D \log w(\gamma(t)) \rangle
  \leq \frac{m-1+\alpha}{t}.
 \end{align*}

 This implies
 \begin{align*}
  \frac{d}{dt}
  \left( \int_{\Sigma_t} w \right)
  \leq
  \int_{\Sigma_t} \left( H + \langle \nu, D \log w \rangle \right) w
  \leq
  \frac{m-1+\alpha}{t} \int_{\Sigma_t} w,
 \end{align*}
 where the first inequality holds because the interior segment domain $\seg^0(p)$ is star-shaped and the second inequality is our differential inequality.

 Integration of this differential inequality implies that the map
 \[
  t \mapsto t^{-(m-1+\alpha)} \int_{\Sigma_t} w
 \]
 is decreasing.
 We observe for any $r > 0$ by the coarea formula
 \begin{align*}
  r^{-(m+\alpha)} \int_{B_r(q)} w
  =
  r^{-(m+\alpha)} \int_0^r \left( \int_{\Sigma_t} w \right) \; dt
  =
  \int_0^1 \left( r^{-(m-1+\alpha)} \int_{\Sigma_{\tau r}} w \right) \; d \tau,
 \end{align*}
 where we use that the cut locus is a set of measure zero.
 The inner bracket on the right-hand side is nonincreasing in the radius $r$ by the previous observation,
 hence the left-hand side is nonincreasing in the radius $r$ as an average of nonincreasing functions.
 
\end{proof}

\end{document}